\documentclass[12pt, twoside, leqno]{article}

\input xy

\xyoption{all}

\usepackage{times,calrsfs}



\usepackage{amsmath,amsthm}
\usepackage{amssymb}

\usepackage{enumerate}

\usepackage{graphicx}



\markboth{T. Austin}{Entropy of probability kernels}




\newtheorem{thm}{Theorem}
\newtheorem{prop}[thm]{Proposition}

\newtheorem{cor}[thm]{Corollary}

\newtheorem{ques}[thm]{Question}


\theoremstyle{definition}


\newtheorem*{xrem}{Remark}


\numberwithin{equation}{section}


\frenchspacing

\textwidth=13.5cm
\textheight=23cm
\parindent=16pt
\oddsidemargin=-0.5cm
\evensidemargin=-0.5cm
\topmargin=-0.5cm




\renewcommand{\bf}[1]{\mathbf{#1}}
\renewcommand{\rm}[1]{\mathrm{#1}}
\renewcommand{\cal}[1]{\mathcal{#1}}


\newcommand{\bbN}{\mathbb{N}}

\newcommand{\bbZ}{\mathbb{Z}}




\newcommand{\sfE}{\mathsf{E}}



\renewcommand{\d}{\mathrm{d}}

\newcommand{\rmh}{\mathrm{h}}


\newcommand{\F}{\mathcal{F}}





\renewcommand{\L}{\Lambda}

\renewcommand{\S}{\Sigma}


\newcommand{\eps}{\varepsilon}

\newcommand{\s}{\sigma}
\renewcommand{\phi}{\varphi}



\renewcommand{\hat}[1]{\widehat{#1}}

\renewcommand{\qed}{\nolinebreak\hspace{\stretch{1}}$\Box$\vspace{7pt}}
\newcommand{\fin}{\nolinebreak\hspace{\stretch{1}}$\lhd$}

\renewcommand{\t}[1]{\tilde{#1}}
\renewcommand{\to}{\longrightarrow}

\newcommand{\op}{\mathrm{op}}


\begin{document}


\baselineskip=17pt


\title{Entropy of probability kernels from the backwards tail boundary}

\author{Tim Austin
}

\date{}

\maketitle


\renewcommand{\thefootnote}{}

\footnote{2010 \emph{Mathematics Subject Classification}: Primary 37A30, 37A35; Secondary 37A50, 60J05.}

\footnote{\emph{Key words and phrases}: Probability kernel, tail boundary, operator entropy.}

\renewcommand{\thefootnote}{\arabic{footnote}}
\setcounter{footnote}{0}


\begin{abstract}
A number of recent works have sought to generalize the Kolmogorov-Sinai entropy of probability-preserving transformations to the setting of Markov operators acting on the integrable functions on a probability space $(X,\mu)$.  These works have culminated in a proof by Downarowicz and Frej that various competing definitions all coincide, and that the resulting quantity is uniquely characterized by certain abstract  properties.

On the other hand, Makarov has shown that this `operator entropy' is always dominated by the Kolmogorov-Sinai entropy of a certain classical system that may be constructed from a Markov operator, and that these numbers coincide under certain extra assumptions.  This note proves that equality in all cases.
\end{abstract}

Let $(X,\mu)$ be a standard Borel probability space, and let $P:X \to \Pr X$ be a probability kernel which preserves $\mu$. When it is needed, $\S_X$ will denote the $\s$-algebra of $X$. The triple $(X,\mu,P)$ is a \textbf{random probability-preserving} (`\textbf{p.-p.}') \textbf{system}.  Such a $P$ may be identified with a Markov operator $L^p(\mu)\to L^p(\mu)$ for any $p \in [1,\infty]$ (that is, an operator fixing $1_X$ and preserving both non-negativity and the integral), and the assumption that $X$ is standard Borel implies that any Markov operator arises this way~(\cite[Subsection 1.2]{Kai92}).

If $T:X \to X$ is a $\mu$-preserving measurable transformation, then one may define a probability kernel $U_T$ by setting $U_T(x,\,\cdot\,) := \delta_{Tx}$.  As a Markov operator this is simply the Koopman operator of $T$. In this way classical p.-p. systems give examples of random p.-p. systems.  These classical examples will sometimes be distinguished by calling them \textbf{non-random}.

Several recent works have sought to generalize the Kolmogorov-Sinai entropy of non-random p.-p. systems to the setting of random p.-p. systems.  This effort began with developments in quantum dynamical systems~\cite{AliAndFanTuy96}, and continued with several proposals for the `operator entropy' of probability kernels (equivalently, Markov operators)~\cite{GhyLanWal86,KamdeSam00,Mak00}.  In~\cite{DowFre05}, it was shown that these quantities all coincide, by showing that they have in common a list of properties which determine the relevant function uniquely.  These developments are summarized in~\cite[Chapter 11]{Dow11}.

For Makarov's definition of operator entropy (and hence also all the others), he showed in~\cite{Mak00} that it is always dominated by the Kolmogorov-Sinai entropy of a naturally-associated deterministic system: the backwards tail boundary of the associated shift-invariant measure on path space.  This note will show that these numbers are actually always equal.

This will need only some basic properties of operator entropy.  The following can all be found, for instance, in~\cite[Chapter 11]{Dow11}.  Firstly, one has
\[\rmh_{\rm{op}}(\mu,P) = \sup_{\F}\rmh_{\rm{op}}(\F,\mu,P),\]
where $\F$ runs over finite families of measurable functions $X \to [0,1]$, and where $\rmh_{\rm{op}}(\F,\mu,P)$ is a function defined on such data.  In addition:
\begin{itemize}
\item[(P1:] consistency under factors) if $\pi:(X_2,\mu_2,P_2)\to (X_1,\mu_1,P_1)$ is a factor map of random p.-p. systems and $\F$ is a finite family of measurable functions $X_1 \to [0,1]$, then
\[\rmh_{\rm{op}}(\{f\circ\pi\,|\ f\in\F\},\mu_2,P_2) = \rmh_{\rm{op}}(\F,\mu_1,P_1);\]
\item[(P2:] consistency with KS entropy) for a non-random p.-p. system $(X,\mu,T)$, one has
\[\rmh_{\rm{op}}(\mu,U_T) = \rmh_{\rm{KS}}(\mu,T),\]
where $\rmh_{\rm{KS}}$ denotes the Kolmogorov-Sinai entropy;
\item[(P3:] continuity in $L^1$) for every $k \in \bbN$ and $\eps > 0$ there is a $\delta > 0$ for which the following holds: if $\F = \{f_1,\ldots,f_k\}$ and $\cal{G} = \{g_1,\ldots,g_k\}$ are two finite families of measurable functions $X \to [0,1]$, then
\[\sum_{i=1}^k\|f_i - g_i\|_1 < \delta \quad \Longrightarrow \quad |\rmh_{\rm{op}}(\F,\mu,P) - \rmh_{\rm{op}}(\cal{G},\mu,P)| < \eps;\]
\item[(P4:] invariance under $P$) for any $(X,\mu,P)$ and $\F$ one has
\[\rmh_\op(\F,\mu,P) = \rmh_\op(P\F,\mu,P).\]
\end{itemize}

The backwards tail boundary is a very classical construction in the study of abstract Markov chains.  It is recalled as a $\sigma$-algebra in~\cite[Section 2]{Mak00}.  For the present paper, the wide-ranging survey~\cite{Kai92} offers a suitable basic reference (although beware that the entropy discussed in~\cite[Section 3]{Kai92} is quite unrelated to that studied here).  Similar material can also be found in some standard probability texts, such as in~\cite[Chapter IV]{Ros71}.

Given a random p.-p. system $(X,\mu,P)$, one first defines the corresponding shift-invariant measure $\t{\mu}$ on the path space $X^\bbZ$ by specifying its finite-dimensional marginals, thus:
\begin{multline}\label{eq:tmu-dfn}
\t{\mu}(X^{(-\infty;i)}\times A_i\times A_{i+1}\times \cdots \times A_{j-1}\times A_j\times X^{(j;\infty)})\\ = \int_{A_i}\int_{A_{i+1}}\cdots \int_{A_j}P(x_{j-1},\d x_j)P(x_{j-2},\d x_{j-1})\cdots P(x_i,\d x_{i+1})\,\mu(\d x_i).
\end{multline}
Giving $X^\bbZ$ the product $\s$-algebra $\S_X^{\otimes \bbZ}$, and letting $S:X^\bbZ\to X^\bbZ$ be the leftward coordinate-shift, this results in a non-random p.-p. system $(X^\bbZ,\t{\mu},S)$.  This probability space is called the \textbf{path space} associated to $(X,\mu,P)$. Abundant, simple examples show that the KS entropy of the shift on the path space need not equal $\rmh_\op(\mu,P)$.

Next, let $\t{\mu}^-$ be the marginal of $\t{\mu}$ on $X^{(-\infty;0]}$.  The rightward-shift, $S^{-1}$, descends to a well-defined transformation $R:X^{(-\infty;0]}\to X^{(-\infty;0]}$, but $R$ is no longer invertible.  Instead, its adjoint as an operator on $L^1(\t{\mu}^-)$ is given by the probability kernel
\[\t{P}((\ldots,x_{-1},x_0),\,\cdot\,) := P(x_0,\,\cdot\,).\]
The obvious coordinate projections now give the factor maps for a tower of random p.-p. systems:
\[(X^\bbZ,\t{\mu},S) \stackrel{\pi_{(-\infty;0]}}{\to} (X^{(-\infty;0]},\t{\mu}^-,\t{P}) \stackrel{\pi_0}{\to} (X,\mu,P).\]

Lastly, within $\S_X^{\otimes (-\infty;0]}$, consider the $\s$-subalgebras
\[\Phi_{\leq n} := \S^{\otimes (-\infty;n]}\otimes \{\emptyset,X\}^{\otimes (n;0]}\]
for each $n \in (-\infty;0]$. The reverse filtration $(\Phi_{\leq n})_{n\leq 0}$ is the \textbf{backwards filtration}, and $\Phi_{-\infty} := \bigcap_n \Phi_{\leq n}$ is the \textbf{backwards tail $\s$-algebra}.  Regarded as a $\s$-subalgebra of $\S_X^{\otimes \bbZ}$, the backwards tail is shift-invariant, so defines a factor $(X^\bbZ,\Phi_{-\infty},\t{\mu}|_{\Phi_{-\infty}},S)$.  Since $X^\bbZ$ is standard Borel, this factor may be generated up to negligible sets by an equivariant map, say
\[\phi: (X^\bbZ,\t{\mu},S) \to (\hat{X},\hat{\mu},\hat{S}),\]
whose target is another standard Borel system called the \textbf{backwards tail boundary} of $(X,\mu,P)$.  Since $\Phi_{-\infty}\leq \S_X^{\otimes (-\infty;0]}$, it follows that, up to a $\t{\mu}$-negligible set, this equivariant map factorizes through the coordinate projection $X^\bbZ\to X^{(-\infty;0]}$.  We have therefore produced a diagram of factor maps
\begin{center}
$\phantom{i}$\xymatrix{
& (X^\bbZ,\t{\mu},S) \ar^{\pi_{(-\infty;0]}}[d]\\
& (X^{(-\infty;0]},\t{\mu}^-,\t{P}) \ar[dl]\ar_{\pi_0}[dr]\\
(\hat{X},\hat{\mu},\hat{S}) && (X,\mu,P).
}
\end{center}

\begin{thm}\label{thm:1}
In the above situation, one has
\[\rm{h}_\op(\mu,P) = \rmh_\op(\t{\mu}^-,\t{P}) = \rmh_{\rm{KS}}(\hat{\mu},\hat{S}).\]
\end{thm}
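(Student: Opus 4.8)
The plan is to isolate a single substantial inequality, namely $\rmh_\op(\t{\mu}^-,\t{P})\leq\rmh_\op(\mu,P)$, and to deduce everything else formally from (P1), (P2), the displayed diagram of factor maps, and Makarov's domination~\cite{Mak00}, $\rmh_\op(\mu,P)\leq\rmh_{\rm{KS}}(\hat{\mu},\hat{S})$. First I would record the two easy inequalities. Since $(X,\mu,P)$ is a factor of $(X^{(-\infty;0]},\t{\mu}^-,\t{P})$ via $\pi_0$, applying (P1) to an arbitrary finite family $\F$ of $[0,1]$-valued measurable functions on $X$ and taking the supremum over $\F$ gives $\rmh_\op(\mu,P)\leq\rmh_\op(\t{\mu}^-,\t{P})$. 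Since $(\hat{X},\hat{\mu},\hat{S})$ is likewise a factor of $(X^{(-\infty;0]},\t{\mu}^-,\t{P})$ (the left edge of the diagram) and is non-random, the same argument together with (P2) gives $\rmh_{\rm{KS}}(\hat{\mu},\hat{S})\leq\rmh_\op(\t{\mu}^-,\t{P})$. Granting the substantial inequality, the first of these becomes an equality $\rmh_\op(\mu,P)=\rmh_\op(\t{\mu}^-,\t{P})$, and then
\[\rmh_{\rm{KS}}(\hat{\mu},\hat{S})\ \leq\ \rmh_\op(\t{\mu}^-,\t{P})\ =\ \rmh_\op(\mu,P)\ \leq\ \rmh_{\rm{KS}}(\hat{\mu},\hat{S})\]
forces all three quantities to agree, proving the theorem.

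It remains to explain how I would prove $\rmh_\op(\t{\mu}^-,\t{P})\leq\rmh_\op(\mu,P)$. Fix a finite family $\cal{G}=\{g_1,\ldots,g_k\}$ of $[0,1]$-valued measurable functions on $X^{(-\infty;0]}$ and an $\eps>0$, and let $\delta>0$ be as provided by (P3) for this $k$ and $\eps$. Since the finite-dimensional cylinder functions are dense in $L^1(\t{\mu}^-)$, I can choose some $N\in\bbN$ and $[0,1]$-valued functions $g_1',\ldots,g_k'$, each depending only on the coordinates indexed by $\{-N,\ldots,0\}$, with $\sum_{i=1}^k\|g_i-g_i'\|_1<\delta$; writing $\cal{G}'=\{g_1',\ldots,g_k'\}$, property (P3) then gives $\rmh_\op(\cal{G},\t{\mu}^-,\t{P})<\rmh_\op(\cal{G}',\t{\mu}^-,\t{P})+\eps$. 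The mechanism of the argument is the concrete description of $\t{P}$: by its very definition as a kernel on $X^{(-\infty;0]}$ (equivalently, as the adjoint of the one-step backward shift $R$), $\t{P}$ acts by shifting the sequence one step to the left and adjoining a fresh rightmost coordinate drawn from $P(x_0,\,\cdot\,)$. Iterating, $\t{P}^N$ shifts the sequence $N$ steps to the left and fills the coordinates indexed by $\{-N+1,\ldots,0\}$ with the values of a length-$N$ $P$-trajectory $x_0,x_1,\ldots,x_N$ (each $x_{j+1}$ drawn from $P(x_j,\,\cdot\,)$), while the original coordinate $0$ lands at position $-N$. Hence a function depending only on the coordinates $\{-N,\ldots,0\}$ becomes, after $\t{P}^N$ is applied, a function of the single coordinate $x_0$ alone: there are $[0,1]$-valued measurable functions $h_1,\ldots,h_k$ on $X$ with $\t{P}^N g_i'=h_i\circ\pi_0$, given explicitly (identifying $g_i'$ with a function on $X^{N+1}$ in the natural way) by $h_i(x)=\int\cdots\int g_i'(x,x_1,\ldots,x_N)\,P(x_{N-1},\d x_N)\cdots P(x,\d x_1)$. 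Now (P4) gives $\rmh_\op(\cal{G}',\t{\mu}^-,\t{P})=\rmh_\op(\t{P}^N\cal{G}',\t{\mu}^-,\t{P})$, and since $\t{P}^N\cal{G}'=\{h_1\circ\pi_0,\ldots,h_k\circ\pi_0\}$ is the pullback along the factor map $\pi_0$ of the family $\{h_1,\ldots,h_k\}$ on $X$, property (P1) gives $\rmh_\op(\t{P}^N\cal{G}',\t{\mu}^-,\t{P})=\rmh_\op(\{h_1,\ldots,h_k\},\mu,P)\leq\rmh_\op(\mu,P)$. Combining, $\rmh_\op(\cal{G},\t{\mu}^-,\t{P})<\rmh_\op(\mu,P)+\eps$; letting $\eps\downarrow 0$ and then taking the supremum over $\cal{G}$ completes the argument.

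The step I expect to be the main obstacle is the index bookkeeping behind the claim that $\t{P}^N g_i'$ is $\pi_0$-measurable: one has to write $\t{P}$ and its iterates out explicitly on the one-sided path space and check carefully how the repeated left-shifting and re-indexing interact with the finite window $\{-N,\ldots,0\}$ on which $g_i'$ is supported --- in effect, that each application of $\t{P}$ erodes this window by one coordinate from the left, so that after $N$ steps only the coordinate at position $0$ survives. Everything else is a routine assembly of properties (P1)--(P4), the $L^1$-density of cylinder functions, and Makarov's inequality; in particular, no information about the backwards tail boundary $(\hat{X},\hat{\mu},\hat{S})$ beyond its appearance in the factor diagram is ever used.
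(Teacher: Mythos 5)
Your proposal is correct and follows essentially the same route as the paper: the two easy inequalities via (P1)/(P2) plus Makarov's bound, and then the key inequality $\rmh_\op(\t{\mu}^-,\t{P})\leq\rmh_\op(\mu,P)$ proved by approximating by cylinder functions via (P3), pushing forward with $\t{P}^N$ via (P4) until the functions descend to $X$, and applying (P1). The index bookkeeping you flag as the main obstacle checks out (the paper phrases it via the Markov property and conditional independence given $x_{-m}$, but it is the same computation), so there is nothing to add.
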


\begin{proof}
The inequality $\rmh_\rm{op}(\mu,P) \leq \rmh_{\rm{KS}}(\hat{\mu},\hat{S})$ is~\cite[Theorem 2.8]{Mak00}. It follows from the convergence
\[(\t{P}^\ast)^n\t{P}^n \to \sfE_{\t{\mu}^-}(\,\cdot\,|\,\Phi_{-\infty}) \quad \hbox{as}\ n\to\infty\]
in the strong topology of operators on $L^2(\t{\mu}^-)$: see~\cite[Lemmas 2.6 and 2.7]{Mak00}.  Awareness of this convergence is actually at least as old as Rota's work~\cite{Rota62}.

Next, the inequality $\rmh_{\rm{KS}}(\hat{\mu},\hat{S}) \leq \rmh_\rm{op}(\t{\mu}^-,\t{P})$ is immediate, because
\begin{itemize}
\item by (P2), one has $\rmh_{\rm{KS}}(\hat{\mu},\hat{S}) = \rmh_\rm{op}(\hat{\mu},U_{\hat{S}})$, and
\item $(\hat{X},\hat{\mu},U_{\hat{S}})$ is a factor of $(X^{(-\infty;0]},\t{\mu}^-,\t{P})$, and $\rmh_{\rm{op}}$ is monotone under factor maps, since it is defined as a supremum over finite families of measurable functions, and for these we may apply (P1).
\end{itemize}

It only remains to show that $\rmh_\rm{op}(\t{\mu}^-,\t{P}) \leq \rmh_{\rm{op}}(\mu,P)$.  Makarov proves this in a special case in~\cite[Theorem 3.1]{Mak00}; we will now do so without his extra assumptions.

Let $\cal{G}$ be a finite set of measurable functions $X^{(-\infty;0]} \to [0,1]$.  It suffices to show that
\[\rmh_\op(\cal{G},\t{\mu}^-,\t{P}) \leq \rmh_\op(\mu,P),\]
since $\rmh_\op(\t{\mu}^-,\t{P})$ is then defined by supremizing over $\cal{G}$ on the left-hand side.

By (P3), it suffices to prove this for all finite $\cal{G}$ contained in some $\|\cdot\|_1$-dense subset of the space of measurable functions $X^{(-\infty;0]} \to [0,1]$.  We may therefore assume that there is some $m\geq 0$ such that every $g \in \cal{G}$ depends on only the coordinates $x_{-m+1},x_{-m+2},\ldots,x_0$ of $x \in X^{(-\infty;0]}$.

Having made this assumption, the Markov property of the law $\t{\mu}$ gives that, for a random string $(\ldots,x_{-1},x_0)$ drawn from $\t{\mu}^-$, the distributions of $(x_{-m+1},\ldots,x_0)$ and $(x_n)_{n \leq -m-1}$ are conditionally independent given $x_{-m}$.  This implies that for every $g \in \cal{G}$, the conditional expectation $\sfE(g\,|\,\Phi_{\leq -m})$ is of the form $Qg(x_{-m})$ for some $Qg:X \to [0,1]$, and hence $\t{P}^mg = \sfE(g\,|\,\Phi_{\leq -m})\circ S^m = Qg\circ\pi_0$.

Letting $\F := \{Qg\circ \pi_0\,|\ g \in \cal{G}\}$, an $m$-fold appeal to (P4) now gives
\[\rmh_\op(\cal{G},\t{\mu}^-,\t{P}) = \rmh_\op(\t{P}^m\cal{G},\t{\mu}^-,\t{P}) = \rmh_\op(\F,\t{\mu}^-,\t{P}).\]
Since all members of $\cal{F}$ are lifted from the random p.-p. system $(X,\mu,P)$, this last quantity is bounded above by $\rmh_\op(\mu,P)$, by (P1).
\end{proof}

\begin{xrem}
If $X$ is a finite set, then for any shift-invariant measure $\t{\mu}$ on $X^\bbZ$ the above $\s$-algebra $\Phi_{-\infty}$ defines the Pinsker factor of $(X^\bbZ,\t{\mu},S)$, which is the maximal factor of entropy zero.  However, in case $X$ is a general state space, this theory does not apply, and the factor $\Phi_{-\infty}$ may have any entropy in $[0,\infty]$. \fin
\end{xrem}

The following properties of $\rmh_{\rm{op}}$ are known, but may also be deduced quickly from Theorem~\ref{thm:1}:

\begin{itemize}
\item One always has $\rmh_\op(\mu,P) \leq \rmh_{\rm{KS}}(\t{\mu},S)$.  This was previously deduced in~\cite{Frej12}.  However, it also follows already from Makarov's inequality $\rmh_\op(\mu,P) \leq \rmh_{\rm{KS}}(\hat{\mu},\hat{S})$, since $(\hat{X},\hat{\mu},\hat{S})$ is a factor of $(X^\bbZ,\t{\mu},S)$.

\item If $(X_i,\mu_i,P_i)$ are two random p.-p. systems for $i=1,2$, and one defines $P_1\otimes P_2:X_1\times X_2\to \Pr(X_1\times X_2)$ by
\[(P_1\otimes P_2)((x_1,x_2),\,\cdot\,) := P_1(x_1,\,\cdot\,)\otimes P_2(x_2,\,\cdot\,),\]
then
\[\rmh_\op(\mu_1\otimes \mu_2,P_1\otimes P_2) = \rmh_\op(\mu_1,P_1) + \rmh_\op(\mu_2,P_2).\]
This was previously shown in~\cite{FrejFrej11}.  It now follows from the corresponding result for KS entropy, since the backwards tail boundary is functorial under products.
\end{itemize}

Theorem~\ref{thm:1} also suggests an obvious definition of conditional operator entropy on a factor, as requested in~\cite[Question 13.1.2]{Dow11}: if
\[(X_1,\mu_1,P_1) \to (X_2,\mu_2,P_2),\]
then the functoriality of the backwards tail boundary gives a factor map
\[(\hat{X}_1,\hat{\mu}_1,\hat{S}_1) \to (\hat{X}_2,\hat{\mu}_2,\hat{S}_2),\]
suggesting that the conditional entropy of the former extension should be the conditional Kolmogorov-Sinai entropy of the latter.  It might be interesting to find a formula for this conditional entropy which does not require the construction of the backwards tail boundaries.

Given a random p.-p. system $(X,\mu,P)$, one may also construct a non-random p.-p. system directly as a factor of it, without first ascending to $(X^{(-\infty;0]},\t{\mu}^-,\t{P})$.  There is a maximal such factor, constructed via the \textbf{deterministic $\s$-algebra} of $(X,\mu,P)$:
\[\Psi_\infty := \bigcap_{m\geq 1}\{A \in \S_X\,|\ P^{(m)}(x,A) \in \{0,1\}\ \hbox{for}\ \mu\hbox{-a.e.}\ x\}\]
(see~\cite{KreLin89}).  The map
\[\Psi_\infty\to \S_X:A\mapsto \{x\in X\,|\ P(x,A) = 1\}\]
takes values in $\Psi_\infty$, and in fact defines a $\mu$-preserving automorphism of that $\s$-subalgebra.  It therefore results from a factor map $\psi:X \to (X',\mu',S')$ to a non-random p.-p. system, and one may check easily that any member of $\S_X$ lifted from a non-random factor of $(X,\mu,P)$ must be a member of $\Psi_\infty$, so this construction gives the maximal non-random factor of $(X,\mu,P)$.

If one applies this construction to $(X^{(-\infty;0]},\t{\mu}^-,\t{P})$, then it simply gives the backwards tail $\s$-algebra again.  Since $(X,\mu,P)$ is itself a factor of $(X^{(-\infty;0]},\t{\mu}^-,\t{P})$, the maximal non-random factor of the former must be contained in the maximal non-random factor of the latter, and so we obtain a factor map of non-random systems
\begin{eqnarray}\label{eq:non-random-factor}
(\hat{X},\hat{\mu},\hat{S})\to (X',\mu',S').
\end{eqnarray}

It is worth observing that this factor map is sometimes not the identity, and that one must use $(\hat{\mu},\hat{S})$ rather than $(\mu',S')$ in Theorem~\ref{thm:1}. This can be seen in the following classical family of examples, which are essentially those in the closing pages of~\cite[Section IV.4]{Ros71} or in~\cite[Section 4]{Mak00}. Let $\bf{p} := (p_k)_{k \in \bbZ} \in [0,1]^\bbZ$, and let
\[\nu_\bf{p} := \bigotimes_{k\in \bbZ} ((1 - p_k)\delta_0 + p_k\delta_1) \in \Pr \bbZ_2^\bbZ,\]
where $\bbZ_2 := \bbZ/2\bbZ$.  Let $S:\bbZ_2^\bbZ\to \bbZ_2^\bbZ$ be the leftward shift, as previously, and define an associated probability kernel $P_\bf{p}:\bbZ_2^\bbZ\to \bbZ_2^\bbZ$ by
\[P_\bf{p}(x,\,\cdot\,) := \delta_{Sx}\ast \nu_\bf{p}.\]
Thus, as a doubly stochastic operator, $P_\bf{p}$ is the composition of $S$ with the convolution by $\nu_\bf{p}$.
Let $\mu := (\frac{1}{2}\delta_0 + \frac{1}{2}\delta_1)^{\otimes \bbZ}$, and $\mu^+ := (\frac{1}{2}\delta_0 + \frac{1}{2}\delta_1)^{\otimes [0;\infty)}$; clearly $\mu$ is $P_\bf{p}$-invariant for every $\bf{p}$.

The analyses in~\cite[Section IV.4]{Ros71} or~\cite[Section 4]{Mak00} give the following.

\begin{prop}\label{prop:ex}
If $\bf{p} \neq \boldsymbol{0}$, then these examples enjoy the following alternative:
\begin{enumerate}
\item[i)] if there is some $k_0$ such that $p_k = 0$ for all $k < k_0$, then one has a commutative diagram
\begin{center}
$\phantom{i}$\xymatrix{
(\hat{X},\hat{\mu},\hat{S}) \ar[d]\ar^-\cong[r] & (\{0,1\}^\bbZ,\mu,\rm{shift}) \ar^{\rm{coord. proj.}}[d]\\
(X',\mu',S') \ar_-\cong[r] & (\{0,1\}^{[0;\infty)},\mu^+,\rm{shift})
}
\end{center}
\item[ii)] if there are arbitrarily large $k$ for which $p_{-k} \neq 0$, but $\sum_{k < 0}p_k < \infty$, then
\[(\hat{X},\hat{\mu},\hat{S}) \cong (\{0,1\}^\bbZ,\mu,\rm{shift})\]
but
\[(X',\mu',S') \cong \rm{trivial};\]
\item[iii)] if $\sum_{k < 0}p_k = \infty$, then
\[(\hat{X},\hat{\mu},\hat{S}) \cong (X',\mu',S') \cong \rm{trivial}\]
\end{enumerate}
$\phantom{i}$\qed
\end{prop}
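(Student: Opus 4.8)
The plan is to analyze the three cases by computing directly the relevant $\sigma$-algebras $\Phi_{-\infty}$ (the backwards tail) and $\Psi_\infty$ (the deterministic $\sigma$-algebra) for the kernel $P_{\bf p}$, using the explicit convolution structure. The key observation is that under $\t\mu^-$, a random sample from the path space of $(\bbZ_2^\bbZ,\mu,P_{\bf p})$ can be written coordinate-wise in terms of a collection of independent Bernoulli inputs: at each time step $-n \to -n+1$ the operator first shifts and then adds an independent $\nu_{\bf p}$-distributed noise vector. Unwinding this, the coordinate $x_0$ at the top of the tower equals a sum of an ``initial'' random bit and a finite (or infinite) collection of independent noise bits that were injected at earlier times, and the spatial coordinate $k$ of the initial string contributes only to the noise bit $p_k$. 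Making this bookkeeping precise is the first step: I would write an explicit formula for the top-level configuration in terms of the triangular array of noise variables, and identify which of those variables are measurable with respect to $\Phi_{\leq n}$ for each $n$.

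\medskip

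Second, I would use this representation to pin down $\Phi_{-\infty}$. A coordinate of the tail boundary survives only if it can be reconstructed from noise injected arbitrarily far in the past; because each noise bit at site $k$ is ``used up'' after finitely many steps unless $p_k$ itself contributes nontrivially at all later times, one sees that the tail boundary records exactly the bits $b_j$ (for $j \in \bbZ$) that are ``refreshed infinitely often,'' and the convergence rate is governed by $\sum_{k<0} p_k$. When $\sum_{k<0}p_k < \infty$, a Borel--Cantelli argument shows that almost surely only finitely many negative-site noises are ever nonzero, so the full bit-sequence $(b_j)_{j\in\bbZ}$ is asymptotically determined and $(\hat X,\hat\mu,\hat S)\cong(\{0,1\}^\bbZ,\mu,\mathrm{shift})$; when $\sum_{k<0}p_k=\infty$, the second Borel--Cantelli lemma gives infinitely many nonzero negative-site noises, which swamp any finite-window information and force $\Phi_{-\infty}$ trivial. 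This dichotomy yields the tail-boundary column of all three cases, and case (i) is the sub-case of $\sum_{k<0}p_k<\infty$ in which, additionally, \emph{no} negative-site noise is ever nonzero, so the identification with the two-sided shift is exact rather than merely asymptotic.

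\medskip

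Third, for the $(X',\mu',S')$ column I would compute $\Psi_\infty$ directly from its definition: $A\in\Psi_\infty$ iff $P_{\bf p}^{(m)}(x,A)\in\{0,1\}$ for a.e.\ $x$ and every $m$. Since $P_{\bf p}^{(m)}$ convolves with $\nu_{\bf p}$ shifted $m$ times (i.e.\ with $\bigotimes_{k}((1-p_{k+m})\delta_0+p_{k+m}\delta_1)$ after accounting for the shift), an event is deterministic under all iterates precisely when it is insensitive to every noise bit that is ever injected at a \emph{forward-moving} site — which is all of them unless $p_k=0$ for all $k<k_0$, in which case the sites $k\ge k_0$ are never disturbed going forward and one recovers the one-sided shift $(\{0,1\}^{[0;\infty)},\mu^+,\mathrm{shift})$. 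In cases (ii) and (iii), some $p_k\ne0$ with $k$ arbitrarily negative guarantees that every coordinate eventually gets hit, so $\Psi_\infty$ is trivial. Assembling the commuting square in case (i) is then routine from the two identifications plus the general factor map~\eqref{eq:non-random-factor}.

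\medskip

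The main obstacle I anticipate is the bookkeeping in the first step: correctly tracking which noise variable lands in which coordinate after $m$ applications of ``shift then convolve,'' and then being careful about whether one works with the path-space measure $\t\mu^-$ on $X^{(-\infty;0]}$ or directly with $(X,\mu,P_{\bf p})$ — the noise injected at time $-n$ at spatial site $k$ appears, after the intervening shifts, at a different spatial coordinate by the time it reaches the top, and getting this index arithmetic right is what makes the Borel--Cantelli input ($\sum_{k<0}p_k$ finite or infinite) come out as stated. Once the array is set up correctly, the two Borel--Cantelli lemmas and the definition-chasing for $\Psi_\infty$ are the only substantive inputs, and the rest is identification of explicit Bernoulli systems.
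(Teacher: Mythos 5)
First, note that the paper does not actually prove this proposition: it is stated with an empty proof and attributed to the analyses in \cite[Section IV.4]{Ros71} and \cite[Section 4]{Mak00}, so there is no internal argument to compare against. Your skeleton — write the time-$0$ configuration as the shifted initial string plus a triangular array of independent noise bits, run Borel--Cantelli on $\sum_{k<0}p_k$, and compute $\Psi_\infty$ coordinate-by-coordinate — is indeed the classical route taken in those references, and your treatment of the $(X',\mu',S')$ column is essentially right (modulo the standard but non-trivial fact that a set all of whose translates have $\nu$-measure $0$ or $1$, for $\nu$ a product of nondegenerate Bernoullis in certain coordinates, must be independent of those coordinates).

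The genuine gap is in your second step, the identification of $\Phi_{-\infty}$. Two problems. (a) In case (ii) the coordinates $(x_0)_k$ are \emph{not} tail-measurable: writing $(x_0)_k=(x_{-n})_{k-n}+\sum_{i<n}\xi^{(-i)}_{k-i}$ with the noise parameters $p_k,p_{k-1},\dots$, one computes $\sfE\big((-1)^{(x_0)_k}\,\big|\,\Phi_{\leq -n}\big)=(-1)^{(x_{-n})_{k-n}}\prod_{i<n}(1-2p_{k-i})$, whose limit has modulus $\big|\prod_{i\geq 0}(1-2p_{k-i})\big|<1$ whenever some negative-site $p_j\neq 0$. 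So the bit sequence is not ``asymptotically determined''; rather the tail is generated by the ghost limits $\eps_k:=\lim_n(-1)^{(x_{-n})_{k-n}}$ (which exist a.s.\ by your Borel--Cantelli input), and you still must check that these are i.i.d.\ fair bits and that the path-space shift acts on them as the Bernoulli shift. (b) More seriously, everything you say only produces a \emph{lower} bound on $\Phi_{-\infty}$; the substantive direction in all three cases is the upper bound, i.e.\ that the tail contains nothing beyond $\sigma(\eps_k:k\in\bbZ)$ (nothing at all in case (iii)). ``Swamping any finite-window information'' is not yet an argument. It can be made into one: for $f\in L^2(\Phi_{-\infty})$ one has $f=\sfE(f\,|\,\Phi_{\leq -n})$ for every $n$, so by reverse-martingale convergence and $L^2$-density of finite-window functions it suffices to compute $\lim_n\sfE(g\,|\,\Phi_{\leq -n})$ for finite-window $g$; this is cleanest using the characters $\chi_F(x)=(-1)^{\sum_{k\in F}x_k}$ of $\bbZ_2^\bbZ$, on which $P_{\bf{p}}$ acts by a shift of $F$ times the explicit eigenvalue $\prod_{k\in F}(1-2p_k)$, turning the whole dichotomy into a statement about whether these products of eigenvalues tend to $0$. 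Without some such argument the proof of the $(\hat{X},\hat{\mu},\hat{S})$ column in all three cases is incomplete.
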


Proposition~\ref{prop:ex} can also be applied to the future tails of $(X,\mu,P)$ and $(X^\bbZ,\t{\mu},S)$, giving the analogous description of those tails depending now on $p_k$ for $k\to\infty$.  Since these may be chosen independently of $p_{-k}$ as $k\to\infty$, this shows that the backward and future tails need not be related.

A different example, also giving a backward tail boundary equal to a Bernoulli shift and a trivial forward tail boundary, is given in~\cite[Theorem 4.4]{Kai92} (although his `forward' and `backward' are the reverse of ours).

A modification of the preceding example shows that while $\rmh_\rm{op}$ behaves well under Cartesian products, it does not enjoy any obvious inequalities for general joinings.

\begin{cor}
There is a diagram of random p.-p. systems
\begin{center}
$\phantom{i}$\xymatrix{
& (X,\mu,P) \ar[dl]\ar[dr]\\
(X_1,\mu_1,P_1) && (X_2,\mu_2,P_2)
}
\end{center}
such that the two factors generate $(X,\mu,P)$, but $\rmh_\op(\mu_1,P_1) = \rmh_\op(\mu_2,P_2) = 0$ while $\rmh_\op(\mu,P) > 0$.
\end{cor}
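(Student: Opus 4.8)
The plan is to build on the family of examples $(\bbZ_2^\bbZ,\mu,P_{\bf{p}})$ introduced just above, exploiting the contrast between cases (i) and (ii) of Proposition~\ref{prop:ex}. First I would pick two one-sided sequences $\bf{p}^{(1)}$ and $\bf{p}^{(2)}$ that each fall into case (ii)---so that each $(\bbZ_2^\bbZ,\mu,P_{\bf{p}^{(i)}})$ has trivial maximal non-random factor $(X_i',\mu_i',S_i')$ and hence, by the relation $\rmh_\op(\mu_i,P_i) \leq \rmh_{\rm{KS}}(\hat{X}_i,\hat{\mu}_i,\hat{S}_i)$ combined with Theorem~\ref{thm:1}---wait, that is the wrong direction. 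The point is rather: in case (ii) the backwards tail boundary $(\hat X_i,\hat\mu_i,\hat S_i)$ is the full Bernoulli shift $(\{0,1\}^\bbZ,\mu,\mathrm{shift})$, which has positive entropy, so Theorem~\ref{thm:1} would give $\rmh_\op(\mu_i,P_{\bf{p}^{(i)}}) > 0$. That is exactly what we do \emph{not} want for the two factor systems. So case (ii) is the wrong regime for the factors; instead I want the factors to have \emph{zero} operator entropy, which by Theorem~\ref{thm:1} means their backwards tail boundary must be trivial, i.e.\ each factor should be of type (iii): $\sum_{k<0}p_k^{(i)} = \infty$. Meanwhile the joining $(X,\mu,P)$ on top should have positive operator entropy, i.e.\ a nontrivial backwards tail boundary.

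Concretely, I would take $P$ to act on $(\bbZ_2^\bbZ)^2 \cong (\bbZ_2^2)^\bbZ$ as $P_{\bf p^{(1)}}\otimes P_{\bf p^{(2)}}$ for suitable one-sided weight sequences, or---better, since a product would just \emph{add} the two zero entropies---choose a genuine joining. The cleanest construction: let $(X,\mu,P) = (\bbZ_2^\bbZ,\mu,P_{\bf q})$ for a single sequence $\bf q$ of type (ii) (so $\rmh_\op(\mu,P) > 0$), and let $(X_1,\mu_1,P_1)$ and $(X_2,\mu_2,P_2)$ be two factors of it obtained by ``splitting'' the noise in a way that destroys the tail. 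For instance, writing the convolution kernel $\nu_{\bf q}$ as a convolution $\nu_{\bf q^{(1)}}\ast \nu_{\bf q^{(2)}}$ and realising $X$ as carrying two independent Bernoulli coordinate sequences whose sum is the observed one, the two coordinate sequences each individually generate a factor on which the induced kernel is $P_{\bf q^{(i)}}$ for a sparser $\bf q^{(i)}$; if each $\bf q^{(i)}$ is arranged to be of type (iii)---$\sum_{k<0}q^{(i)}_k=\infty$---then each factor has trivial backwards tail boundary and hence zero operator entropy by Theorem~\ref{thm:1}, while their join recovers all of $(X,\mu,P)$. The arithmetic constraint I must satisfy is that a sequence $\bf q$ with $\sum_{k<0}q_k<\infty$ (type (ii), needed for $\rmh_\op(\mu,P)>0$) can be split as the ``convolution sum'' of two sequences each with divergent negative-index sum; this is possible precisely because we are free to make the two $\bf q^{(i)}$ large at \emph{different} indices $k<0$, so each diverges while their combined effect on the original chain is controlled.

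The steps, in order, are: (1) fix $\bf q$ of type (ii) and invoke Proposition~\ref{prop:ex}(ii) plus Theorem~\ref{thm:1} to get $\rmh_\op(\mu,P_{\bf q}) > 0$; (2) construct the explicit two-fold joining realising $X$ together with two factor maps $\pi_i$ such that the induced kernel on $X_i$ is conjugate to $P_{\bf q^{(i)}}$ for sequences $\bf q^{(i)}$ satisfying $\sum_{k<0}q^{(i)}_k = \infty$, and such that $\pi_1\vee\pi_2$ generates $\S_X$; (3) apply Proposition~\ref{prop:ex}(iii) to each $(X_i,\mu_i,P_i)$ to see $(\hat X_i,\hat\mu_i,\hat S_i)$ is trivial; (4) apply Theorem~\ref{thm:1} to each factor to conclude $\rmh_\op(\mu_i,P_i)=0$. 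I expect the main obstacle to be step (2): making the joining \emph{generate}---that is, checking that the two sparsified Bernoulli coordinate sequences really do recover the original sequence and not merely a proper sub-$\sigma$-algebra---while simultaneously keeping each induced one-sided kernel of type (iii). A convenient device is to let $X$ carry an independent $\bbZ_2^\bbZ$-valued ``selector'' sequence that routes each coordinate's noise into channel $1$ or channel $2$; then channel $i$ sees noise only at a subset $A_i\subseteq\bbZ$ of indices, one chooses $A_1,A_2$ partitioning (a co-finite subset of) $\bbZ$ with $\sum_{k<0,\,k\in A_i} q_k = \infty$ for both $i$ (possible since the full sum over $k<0$ with $q_k$ replaced by any positive lower bound diverges---one just needs $\bf q$ supported on an infinite subset of the negatives, which type (ii) with ``arbitrarily large $k$ for which $p_{-k}\neq 0$'' already guarantees), and the join of the two channels plus the selector recovers everything. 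Finally I would remark that, just as in the paragraph preceding the Corollary, this shows $\rmh_\op$ admits no subadditivity-type inequality $\rmh_\op(\mu,P)\le \rmh_\op(\mu_1,P_1)+\rmh_\op(\mu_2,P_2)$ for generating joinings, in contrast with its exact additivity under Cartesian products.
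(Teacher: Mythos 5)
There is a genuine gap: the arithmetic at the heart of your step (2) is impossible. You want the top system to be (conjugate to) $P_{\mathbf{q}}$ with $\mathbf{q}$ of type (ii), so $\sum_{k<0}q_k<\infty$, while each channel sees noise whose parameters sum to $\infty$ over $k<0$. But in the selector version the channel $i$ parameters are at most $q_k\,1_{A_i}(k)$, so $\sum_{k<0,\,k\in A_i}q_k\leq\sum_{k<0}q_k<\infty$; and in the convolution version, $\nu_{\mathbf{q}}=\nu_{\mathbf{q}^{(1)}}\ast\nu_{\mathbf{q}^{(2)}}$ forces $q_k=q^{(1)}_k+q^{(2)}_k-2q^{(1)}_kq^{(2)}_k\geq\frac12\big(q^{(1)}_k+q^{(2)}_k\big)$ whenever $q^{(i)}_k\leq\frac12$, so divergence of either $\sum_{k<0}q^{(i)}_k$ forces divergence of $\sum_{k<0}q_k$, i.e.\ pushes the top system itself into type (iii) and hence to zero entropy. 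Your parenthetical justification conflates ``$q_{-k}\neq 0$ for arbitrarily large $k$'' with ``$\sum_{k<0}q_k=\infty$'': type (ii) asserts the former and denies the latter, so the negative-index support cannot be partitioned into two pieces each carrying infinite mass. More fundamentally, any scheme in which the two factors receive \emph{independent} shares of the noise is doomed, because independent noises accumulate in the join, which then carries at least as much randomization as either factor.

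The idea you are missing is to make the noise in the two factors \emph{perfectly correlated} rather than independent. The paper takes $X:=\bbZ_2^\bbZ\times\bbZ_2^\bbZ$ with $\mu$ the Haar measure and $P((x_1,x_2),\,\cdot\,):=\delta_{(Sx_1,Sx_2)}\ast\nu$, where $\nu:=\big(\frac12\delta_{(0,0)}+\frac12\delta_{(1,1)}\big)^{\otimes\bbZ}$, so that the \emph{same} fair-coin noise is added to both coordinates. Each coordinate projection is then a factor map onto $P_{\mathbf{p}}$ with $\mathbf{p}\equiv\frac12$, which is of type (iii) and hence has zero operator entropy by Proposition~\ref{prop:ex} and Theorem~\ref{thm:1}, and the two coordinates trivially generate. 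Positivity of $\rmh_\op(\mu,P)$ does not come from the top system being of type (ii) at all: it comes from the diagonal support of $\nu$, which makes the difference map $(x_1,x_2)\mapsto x_1-x_2$ a factor onto the \emph{non-random} Bernoulli shift, so monotonicity gives $\rmh_\op(\mu,P)\geq\log 2$. Your closing remark about the failure of subadditivity for generating joinings is correct and is exactly the moral the paper draws, but the construction you propose to witness it does not exist.
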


\begin{proof}
Let $X := \bbZ_2^\bbZ\times \bbZ_2^\bbZ$ with the two obvious projections to $X_1 := X_2 := \bbZ_2^\bbZ$.  Let $\mu_1 = \mu_2$ be the Haar measure on $\bbZ_2^\bbZ$ and let $\mu := \mu_1\otimes \mu_2$.  Finally, let
\[\nu:= \Big(\frac{1}{2}\delta_{(0,0)} + \frac{1}{2}\delta_{(1,1)}\Big)^{\otimes \bbZ} \in \Pr X,\]
and let $P(x,\,\cdot\,) := \delta_{Sx}\ast \nu$.  Then the two coordinate projections $X \to X_i$ are both factor maps from $P$ to $P_\bf{p}$, the kernel defined previously, with $\bf{p} = (\ldots,1/2,1/2,\ldots)$, and by Proposition~\ref{prop:ex} this has trivial tail boundary and hence zero operator entropy.  However, the group homomorphism
\[X\to \bbZ_2^\bbZ:(x_1,x_2)\mapsto x_1 - x_2\]
is also a factor map from $P$, this time to the non-random leftward shift $S$ on $\bbZ_2^\bbZ$.  Therefore, by monotonicity, $\rmh_\op(\mu,P)$ is at least the KS entropy of the Bernoulli shift $S$ on $\bbZ_2^\bbZ$, which is $\log 2$.  (In fact, just a little more care shows that they are equal in this case.)
\end{proof}

This corollary suggests that there is no simple analog for $\rmh_\op$ of the notion of the Pinsker factor for a non-random p.-p. system, since the above example gives two factors of $(X,\mu,P)$ which both have zero entropy, but cannot be contained in a single factor of zero entropy.  (See~\cite[Question 13.1.4]{Dow11}.)

We finish by collecting some directions for further investigation.

\begin{itemize}
\item Firstly, one could easily generalize some of the definitions of $\rmh_{\rm{op}}$ to the setting of a $\mu$-preserving continuous-time semigroup $(P^t)_{t\geq 0}$ of Markov operators.  All of the arguments above should go through in that setting, using the standard analogous machinery for continuous-time Markov processes in the appropriate places: see, for instance,~\cite[Chapter 20]{Kal02}.

\item In~\cite{DowFre05}, Downarowicz and Frej also introduced a topological (as opposed to measure-preserving) version of operator entropy for a suitable class of Markov operators on compact metric spaces, and showed that it retains various classical properties of topological dynamical entropy.  It would be interesting to see whether it, too, could be reduced to an instance of that classical notion, perhaps using some kind of topological boundary.

\item In~\cite[Question 13.1.6]{Dow11}, Downarowicz asks how operator entropy behaves under convex combination of probability kernels.  It is not at all clear how the convex structure here interacts with backwards tail boundaries.  Mostly simply, given $(X,\mu)$ and two $\mu$-preserving transformations $S,T:X \to X$, it is not clear why the tail of $P(x,\,\cdot\,):= \frac{1}{2}(\delta_{Sx} + \delta_{Tx})$ should bear any relation to $S$ or $T$ themselves.  However, if $S$ and $T$ commute and are invertible, then one can say something: in that case a simple appeal to the norm ergodic theorem gives
\[P^nf = 2^{-n}\sum_{p,q\geq 0,\,p + q = n}\binom{n}{p}S^pT^qf \sim \sfE(f\,|\,\L)\circ T^n \quad \hbox{in}\ \|\cdot\|_1,\]
where $\L \leq \S_X$ is the $\s$-algebra of $S^{-1}T$-invariant sets.  This implies that the backward tail boundary of $P$ is just $(X,\L,\mu|_\L,S)$, which is a factor of both $(X,\mu,S)$ and $(X,\mu,T)$ and so has KS entropy bounded by either of their KS entropies.  If $S$ and $T$ commute but are not invertible, then we obtain this asymptotic behaviour upon ascending to the natural extension of the $\bbN^2$-action generated by $S$ and $T$, and this does not change the KS entropies of $S$ and $T$.

\item Finally, motivated by Proposition~\ref{prop:ex}, I think it might be interesting to answer the following:

\begin{ques}
Which extensions of non-random p.-p. systems can arise as the extension in~(\ref{eq:non-random-factor}) for some $(X,\mu,P)$?
\end{ques}

Using examples such as in Proposition~\ref{prop:ex} as a building-block, it is easy to obtain any relatively Bernoulli extension this way.  On the other hand, I suspect this extension is always relatively mixing, so there are some restrictions.
\end{itemize}

\normalsize
\baselineskip=17pt

\bibliographystyle{abbrv}
\bibliography{bibfile}

\def\cprime{$'$}
\begin{thebibliography}{10}

\bibitem{AliAndFanTuy96}
R.~Alicki, J.~Andries, M.~Fannes, and P.~Tuyls.
\newblock An algebraic approach to the {K}olmogorov-{S}inai entropy.
\newblock {\em Rev. Math. Phys.}, 8(2):167--184, 1996.

\bibitem{Dow11}
T.~Downarowicz.
\newblock {\em Entropy in dynamical systems}, volume~18 of {\em New
  Mathematical Monographs}.
\newblock Cambridge University Press, Cambridge, 2011.

\bibitem{DowFre05}
T.~Downarowicz and B.~Frej.
\newblock Measure-theoretic and topological entropy of operators on function
  spaces.
\newblock {\em Ergodic Theory Dynam. Systems}, 25(2):455--481, 2005.

\bibitem{FrejFrej11}
B.~Frej and P.~Frej.
\newblock An integral formula for entropy of doubly stochastic operators.
\newblock {\em Fund. Math.}, 213(3):271--289, 2011.

\bibitem{Frej12}
P.~Frej.
\newblock Entropy of a doubly stochastic {M}arkov operator and of its shift on
  the space of trajectories.
\newblock {\em Colloq. Math.}, 126(2):205--216, 2012.

\bibitem{GhyLanWal86}
{\'E}.~Ghys, R.~Langevin, and P.~Walczak.
\newblock Entropie mesur\'ee et partitions de l'unit\'e.
\newblock {\em C. R. Acad. Sci. Paris S\'er. I Math.}, 303(6):251--254, 1986.

\bibitem{Kai92}
V.~A. Kaimanovich.
\newblock Measure-theoretic boundaries of {M}arkov chains, {$0$}-{$2$}, laws
  and entropy.
\newblock In {\em Harmonic analysis and discrete potential theory ({F}rascati,
  1991)}, pages 145--180. Plenum, New York, 1992.

\bibitem{Kal02}
O.~Kallenberg.
\newblock {\em Foundations of modern probability}.
\newblock Probability and its Applications (New York). Springer-Verlag, New
  York, second edition, 2002.

\bibitem{KamdeSam00}
B.~Kami{\'n}ski and J.~de~Sam~Lazaro.
\newblock A note on the entropy of a doubly stochastic operator.
\newblock {\em Colloq. Math.}, 84/85(part 1):245--254, 2000.
\newblock Dedicated to the memory of Anzelm Iwanik.

\bibitem{KreLin89}
U.~Krengel and M.~Lin.
\newblock On the deterministic and asymptotic {$\sigma$}-algebras of a {M}arkov
  operator.
\newblock {\em Canad. Math. Bull.}, 32(1):64--73, 1989.

\bibitem{Mak00}
I.~I. Makarov.
\newblock Dynamical entropy for {M}arkov operators.
\newblock {\em J. Dynam. Control Systems}, 6(1):1--11, 2000.

\bibitem{Ros71}
M.~Rosenblatt.
\newblock {\em Markov processes. {S}tructure and asymptotic behavior}.
\newblock Springer-Verlag, New York-Heidelberg, 1971.
\newblock Die Grundlehren der mathematischen Wissenschaften, Band 184.

\bibitem{Rota62}
G.-C. Rota.
\newblock An ``{A}lternierende {V}erfahren'' for general positive operators.
\newblock {\em Bull. Amer. Math. Soc.}, 68:95--102, 1962.

\end{thebibliography}

\noindent\small{Courant Institute of Mathematical Sciences}\\
\small{New York University}\\
\small{251 Mercer St, New York, NY 10012, U.S.A.}\\
\small{E-mail: tim@cims.nyu.edu}

\end{document}